\newcommand{\integers}{{\mathbb Z}}
\newcommand{\B}[2]{B^{(#1)}_{#2}}
\newcommand{\F}{{\mathbb F}}
\newcommand{\cM}{{\cal M}}
\newcommand{\Wlog}{W.l.o.g.}
\newcommand\undermat[2]{%
  \makebox[0pt][l]{$\smash{\underbrace{\phantom{%
    \begin{matrix}#2\end{matrix}}}_{\text{$#1$}}}$}#2}
\DeclareMathOperator{\supp}{supp}
\newtheorem{lemma}{Lemma}
\newtheorem{proposition}{Proposition}
\newtheorem{corollary}{Corollary}
\newtheorem{claim}{Claim}
\newcommand{\bF}{\mathbb{F}}
\newcommand{\row}{{\bf row}}
\begin{document}

%%%%%%%  TITLE  %%%%%%%%%%%%
\title{A Combinatorial Problem Solved by a Meta-Fibonacci Recurrence Relation}
%\title{Vectors of Maximum Support are Bounded Below by the Meta-Fibonacci Recurrence Relation}

%%%%%%% AUTHOR %%%%%%%%%%%%
\author{Ramin Naimi \qquad Eric Sundberg \\
\small Mathematics Department \\[-0.8ex]
\small Occidental College \\[-0.8ex]
\small Los Angeles, CA \\[-0.8ex]
\small\tt \{rnaimi, sundberg\}@oxy.edu \\
\small Mathematics Subject Classification: 05A15
}

%\date{\dateline{January 14, 2015}{ XX}\\
%   \small Mathematics Subject Classification: 91A46}

%%%%% MAKE THE TITLE %%%%
\maketitle

%%%%%%%%%  ABSTRACT %%%%%%%%%%%%%%%%%%

\begin{abstract}
We present a natural, combinatorial problem whose solution is given by the 
meta-Fibonacci recurrence relation $a(n) = \sum_{i=1}^p  a(n-i+1 - a(n-i))$,
where $p$ is prime.  This combinatorial problem is less general than those
given in \cite{jacrus} and \cite{rusdeu}, but it has the advantage of having a simpler
statement.

\end{abstract}

\section{Introduction}

Let $M$ be a matrix with entries in $\integers_2$, such that every column contains
at least one~1.  We want to pick a subset of the rows such that when they are added
together modulo~2, their sum $\vec{s}$ has as many~1's as possible.  If $M$
has $n$ columns, what is the largest number of 1's we can guarantee 
$\vec{s}$ to have?  For example, if $n=5$, we can always find a set of  rows
whose sum $\vec{s}$ contains at least four 1's.
Let $\lambda(n)$ denote the largest number of 1's $\vec{s}$ can be guaranteed to have for
any $M$ with $n$ nonzero
columns.  We will show that $\lambda(n)$ satisfies the recurrence
relation 
\begin{equation}\label{mf_eqn}
\lambda(n) = \lambda(n-\lambda(n-1)) + \lambda(n-1 -\lambda(n-2)).
\end{equation}
More generally, for $p$ prime,  let $\vec{v} = (v_1, \ldots, v_n)$ satisfy $v_i \in\F_p$ for $1\leq i \leq n$.
Let $\supp(\vec{v}) = \{i \in [n]: v_i \neq 0\}$ and let $\|\vec{v}\| = |\supp(\vec{v})|$, 
i.e., $\|\vec{v}\|$ is the number of nonzero terms in $\vec{v}$.
Let $M$ be an $m\times n$ matrix whose entries are in $\F_p$.  Let $\row(M)$ be the 
rowspace of $M$, i.e., the set of all linear combinations of the row vectors of $M$
over the field $\F_p$.
Let $c(M)$ denote the {\em capacity} of $M$, which we define as follows,
$$c(M) = \max_{\vec{v} \in \row(M)} \|\vec{v}\|.$$
For each integer $n \geq 1$, let $\lambda_p(n)$ %(or just $\lambda(n)$) 
be the minimum possible capacity of an $\bF_p$-matrix
consisting of $n$ {\em nonzero} columns (i.e., no column equals $\vec{0}$).  
Restated, let
$$\cM_n^* = \{M \in \F_p^{m\times n} : 1 \leq m \leq p^n \mbox{ and no column of $M$ equals $\vec{0}$}\},$$
then 
$$\lambda_p(n) = \min_{M\in\cM_n^*} c(M).$$  
We will see that $\lambda_p$ satisfies the recurrence relation
\begin{equation}\label{mfp_eqn}
\lambda_p(n) = \sum_{i=1}^p  \lambda_p(n-i+1 -\lambda_p(n-i)).
\end{equation}
This type of recurrence relation is called a meta-Fibonacci relation.

Meta-Fibonacci sequences have been studied by various authors, dating at least as far back
as 1985, when Hofstadter~\cite{hof} apparently coined the term ``meta-Fibonacci."  
These are integer sequences defined by ``nested, Fibonacci-like"
recurrence relations, such as relation~\eqref{mf_eqn}, 
which was studied by Conolly~\cite{con},
and  \eqref{mfp_eqn}.
Generalizations of \eqref{mfp_eqn} were shown in \cite{jacrus} and \cite{rusdeu} to be
solutions to certain combinatorial problems involving $k$-ary infinite trees, 
and compositions of integers.   
The ``matrix capacity'' problem described above is a different combinatorial problem whose solution is also given by  relation~\eqref{mfp_eqn}.
This combinatorial problem is ``natural"  in the sense that 
it arose while the first named author was working on a problem in spatial graph theory.   
It was only later that we learned (through the OEIS \href{http://oeis.org/A046699}{A046699}) 
that it can be characterized as a meta-Fibonacci sequence.

\section{Main Result}

We begin with a lemma which allows us to produce a lower bound on $\lambda_p(n)$.  
For the remainder of this paper, instead of writing $\lambda_p$, we will simply write $\lambda$. 
For a matrix $M$,
let $\row^*(M) = \row(M) -\{\vec{0}\}.$
\begin{lemma}\label{incr_lemma}
Let $M$ be an $\bF_p$-matrix with $n$ nonzero columns, i.e., $M\in\cM_n^*$.  %Then $c(M) > \frac{n}{2}$.
Let $\vec{v} \in \row^*(M).$  If $$p \lambda(n-\|\vec{v}\|) > \|\vec{v}\|,$$
then there is a vector $\vec{z}\in \row^*(M)$ such that $\|\vec{z}\| > \|\vec{v}\|.$
\end{lemma}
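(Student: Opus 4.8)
The plan is to use $\vec{v}$ together with a high-weight vector supported off $\supp(\vec{v})$ to build a one-parameter family of row-space vectors, and then argue by averaging that one member of the family must beat $\vec{v}$. Write $k = \|\vec{v}\|$ and $S = \supp(\vec{v})$, so $|S| = k$. First I would delete the $k$ columns of $M$ indexed by $S$ and keep the remaining $n-k$ columns; since no column of $M$ is zero, this submatrix $M'$ lies in $\cM_{n-k}^*$, so by definition $c(M') \geq \lambda(n-k)$. Choosing $\vec{w} \in \row(M')$ with $\|\vec{w}\| = c(M')$ and lifting the same linear combination of rows back to all of $M$ produces a vector $\vec{u} \in \row(M)$ that agrees with $\vec{w}$ on the columns outside $S$. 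Thus $\vec{u}$ has at least $\lambda(n-k)$ nonzero entries off $S$; call this number $a$, and let $b$ be the number of nonzero entries of $\vec{u}$ on $S$.

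If $\|\vec{u}\| = a + b > k$ we are already done with $\vec{z} = \vec{u}$, so assume $a + b \leq k$. Now consider the $p$ vectors $\vec{v} + t\vec{u}$ for $t \in \F_p$, all of which lie in $\row(M)$. The key computation is to count, coordinate by coordinate, how many of these $p$ vectors are nonzero in a given position $i$. Here the field structure of $\F_p$ is essential: whenever $u_i \neq 0$, the equation $v_i + t u_i = 0$ has exactly one solution $t \in \F_p$, so coordinate $i$ is nonzero in exactly $p-1$ of the $p$ vectors; a coordinate with $u_i = 0$ is nonzero in all $p$ vectors when $v_i \neq 0$ and in none when $v_i = 0$. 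Summing these contributions over the four types of coordinates (inside/outside $S$, with $u_i$ zero/nonzero) gives a total nonzero count of $a(p-1) + kp - b$ across the whole family.

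Finally I would average: the mean number of nonzero entries among the $p$ vectors is $k + \frac{a(p-1) - b}{p}$, so it suffices to establish $a(p-1) > b$, since then some member of the family has strictly more than $k$ nonzero entries and is automatically nonzero, hence lies in $\row^*(M)$. This is exactly where the hypothesis enters: from $a \geq \lambda(n-k)$ and $p\lambda(n-k) > k$ we obtain $ap > k$, while the standing assumption $a + b \leq k$ gives $b \leq k - a$; combining, $a(p-1) = ap - a > k - a \geq b$, as required. The main obstacle I anticipate is not the coordinate bookkeeping itself but spotting the case split on whether $\|\vec{u}\| > k$: without first disposing of the case where $\vec{u}$ is already heavy, the crude bound $b \leq k$ only yields the weaker hypothesis $(p-1)\lambda(n-k) > k$, and it is precisely the interplay between the two cases that recovers the sharp factor of $p$ in the statement.
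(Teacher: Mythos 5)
Your proof is correct and follows essentially the same route as the paper's: both arguments lift a vector of weight at least $\lambda(n-k)$ from the submatrix on the columns outside $\supp(\vec{v})$, and then find a scalar multiple $t$ for which $\vec{v}+t\vec{u}$ beats $\vec{v}$, exploiting the field structure through the fact that each coordinate can be cancelled by at most one choice of $t$. The only differences are organizational: the paper splits cases on whether $\|\vec{w}_L\| \geq (p-1)\lambda(n-k)$ and applies a pigeonhole argument to the pairwise disjoint cancellation sets $S_a$ over the $p-1$ nonzero scalars, whereas you split on whether $\|\vec{u}\| > k$ and average the total weight over all $p$ scalars --- two bookkeepings of the same count.
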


\begin{proof}
Let $M$ be an $\bF_p$-matrix with $n$ nonzero columns.  
Let $\vec{v} \in \row^*(M)$, and let $k = \|\vec{v}\|.$ Let $\vec{v} = (v_1, \ldots, v_n).$  \Wlog, 
suppose $v_i \neq 0$ for $1 \leq i \leq k$ 
and $v_i = 0$ for $k+1\leq i \leq n$. 
Let $\vec{w}\in\row^*(M)$ be such that $w_i \neq 0$ for at least $\lambda(n-k)$  coordinates $i$, where $k+1 \leq i \leq n$.
In other words, if we let $\vec{w}_L  = (w_1, \ldots, w_k)$ and $\vec{w}_R = (w_{k+1}, \ldots, w_n)$, then $\|\vec{w}_R\| \geq \lambda(n-k)$.
Since $\|\vec{w}\| = \|\vec{w}_L\|+\|\vec{w}_R\|$,
if $\|\vec{w}_L\| \geq (p-1)\lambda(n-k)$, then $\|\vec{w}\| \geq p \lambda(n-k) > \|\vec{v}\|$, and we are done.
So we may assume that $\|\vec{w}_L\| < (p-1)\lambda(n-k).$

Our goal will be to prove that there exists a nonzero constant $c$ such that 
$\|c \vec{w}_L + \vec{v}_L\|  > k - \lambda(n-k)$, where $\vec{v}_L = (v_1, \ldots, v_k).$  Once we establish
that such a constant exists, then we will be done, because we will have 
$\|c\vec{w} + \vec{v}\| = \|c \vec{w}_L + \vec{v}_L\| + \|\vec{w}_R\| > (k-\lambda(n-k)) + \lambda(n-k) = k.$

%For $1 \leq a \leq p-1$,  let $S_a = \{i \in [k] : a w_i + v_i = 0\}$.  Let $S_L = \{i\in [k] : w_i = 0\}$.
%Since $v_i \neq 0$ for $1 \leq i \leq k$, then $S_a \cap S_L = \emptyset,$ i.e., none of the indices
%on which $\vec{w}_L$ is zero appear in any of the $S_a$.  Thus, if $a w_i + v_i = 0 = b w_i + v_i$,
%then $w_i \neq 0$, which allows us to conclude that $a=b$. 

For $1 \leq a \leq p-1$,  let $S_a = \{i \in [k] : a w_i + v_i = 0\}$. 
%, where $[k] = \{1, \ldots, k\}$.  %Let $\supp(\vec{w}_L) = \{i\in [k] : w_i \neq 0\}$.
Since $v_i \neq 0$ for $1 \leq i \leq k$, then $S_a \subseteq \supp(\vec{w}_L).$ 
%i.e., none of the indices on which $\vec{w}_L$ is zero appear in any of the $S_a$.  
Thus, if $a w_i + v_i = 0 = b w_i + v_i$,
then $w_i \neq 0$, which allows us to conclude that $a=b$. Therefore, if $a \neq b$, then 
$S_a \cap S_b = \emptyset$.  Since $\bigcup_{a=1}^{p-1} S_a \subseteq \supp(\vec{w}_L)$
and the $S_a$ are pairwise disjoint, we have 
$$\sum_{a=1}^{p-1} |S_a| \leq |\supp(\vec{w}_L)| = \|\vec{w}_L\|<(p-1)\lambda(n-k).$$
Therefore, the average value of $|S_a|$ is strictly less than $\lambda(n-k)$, and if we let $c\in[p-1]$ be such that
$|S_c|$ is minimum, then $|S_c| < \lambda(n-k)$.
Thus, $\|c \vec{w}_L + \vec{v}_L\| = k - |S_c| > k - \lambda(n-k)$, and
%$\|c\vec{w} + \vec{v}\| = \|c \vec{w}_L + \vec{v}_L\| + \|\vec{w}_R\| > (k-\lambda(n-k)) + \lambda(n-k) = k.$
as noted above, we are done.
Specifically, $\|c\vec{w}+\vec{v}\| > \|\vec{v}\|.$
\end{proof}

%For $1\leq a \leq p-1$, let $I_a = \{i \in [k] : w_i = a\}$, i.e.,  $I_a$ is the set of indices 
%$1\leq i \leq k$ such that $w_i = a$.
%Let $a \in [p-1]$ satisfy $|I_a|$ is minimum, i.e., $|I_a| \leq |I_b|$ for all $b\in[p-1]$.
%Since $\|\vec{w}_L\| = \sum_{b=1}^{p-1}|I_b|$ and $\|\vec{w}_L\| < (p-1) \lambda(n-k)$, then 
%the average value of $|I_b|$ is less than $\lambda(n-k)$, thus, $|I_a| < \lambda(n-k).$
%Let $c\in[p-1]$ be such that $a\cdot c = -1 \pmod{p}$.  Then for $b\in[p-1]-\{a\}$, 
%we have $b\cdot c \neq -1\pmod{p}$.
%(Let $\vec{v}_L = (v_1, \ldots, v_k).$)
%Thus, $\|c \vec{w}_L + \vec{v}_L\| = k - |I_a| > k - \lambda(n-k)$, and
%$\|c\vec{w} + \vec{v}\| = \|c \vec{w}_L + \vec{v}_L\| + \|\vec{w}_R\| > (k-\lambda(n-k)) + \lambda(n-k) = k.$

It is easy to check that the following corollary holds. 
\begin{corollary}\label{small_n}
If $1 \leq n \leq p$, then $ \lambda(n) = n.$
\end{corollary}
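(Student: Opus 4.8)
The plan is to prove $\lambda(n)=n$ by establishing the two inequalities $\lambda(n)\le n$ and $\lambda(n)\ge n$ separately, bootstrapping the second one from Lemma~\ref{incr_lemma}. The upper bound is immediate: every vector in $\row(M)$ has length $n$, so $\|\vec{v}\|\le n$ for all $\vec{v}\in\row(M)$, giving $c(M)\le n$ for every $M\in\cM_n^*$ and hence $\lambda(n)\le n$.

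For the lower bound I would fix an arbitrary $M\in\cM_n^*$ and show $c(M)\ge n$. Since $M$ has at least one nonzero column it has a nonzero entry, so $\row^*(M)\neq\emptyset$; pick any $\vec{v}\in\row^*(M)$ and set $k=\|\vec{v}\|\ge 1$. The key observation is that whenever $k<n$, the hypothesis of Lemma~\ref{incr_lemma} holds automatically. Indeed $n-k\ge 1$, so any matrix in $\cM_{n-k}^*$ has a nonzero row-vector and thus $\lambda(n-k)\ge 1$; combined with $k<n\le p$ (so $k\le p-1$), this gives $p\,\lambda(n-k)\ge p>k$. Lemma~\ref{incr_lemma} then produces $\vec{z}\in\row^*(M)$ with $\|\vec{z}\|>k$.

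Iterating this step yields a strictly increasing sequence of weights of vectors in $\row^*(M)$; since these weights are integers bounded above by $n$, the process terminates at a vector of weight exactly $n$. Hence $c(M)\ge n$, and together with $c(M)\le n$ we get $c(M)=n$. As $M\in\cM_n^*$ was arbitrary, $\lambda(n)=n$.

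The only point requiring care is the range of the argument of $\lambda$: I must confirm $n-k\ge 1$ at every step (which holds precisely because the iteration halts once $k=n$), and I must invoke only the trivial bound $\lambda(m)\ge 1$ rather than any exact value, so no circularity with the unproven identity $\lambda(n)=n$ creeps in. There is no genuine obstacle here; all the real content sits in Lemma~\ref{incr_lemma}, and the corollary amounts to the remark that for $n\le p$ its hypothesis can never fail while the weight is below $n$.
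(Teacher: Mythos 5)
Your proof is correct. The paper itself offers no argument here --- it merely says ``It is easy to check that the following corollary holds'' --- so there is no written proof to compare against; but since the statement is placed as a corollary of Lemma~\ref{incr_lemma}, your bootstrap is almost certainly the intended derivation. The points you flag are exactly the ones needing care, and you handle each correctly: the trivial bounds $\|\vec{v}\|\le n$ and $\lambda(m)\ge 1$ for all $m\ge 1$ (any matrix in $\cM_m^*$ has a nonzero row, hence a nonzero vector in its rowspace); the inequality $p\,\lambda(n-k)\ge p > k$, valid precisely because $k\le n-1\le p-1$; the fact that the argument of $\lambda$ stays at least $1$ while the iteration runs; and termination, since the weights are strictly increasing integers bounded by $n$. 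There is no circularity, as only the bound $\lambda(m)\ge 1$ is used for $m<n$, never the identity being proved.
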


For an integer $k\geq 0$, let $\sigma_k = \sum_{j=0}^k p^j.$

\begin{proposition}\label{lin_comb}
Suppose $$n = \sum_{j=\ell}^k b_j \sigma_j,$$
where $b_k \geq 1$, and $0 \leq b_j \leq p-1$ for $j \neq \ell$, and $1 \leq b_\ell \leq p$.
Then 
$$\lambda(n) \geq  \sum_{j=\ell}^k b_j p^j.$$
\end{proposition}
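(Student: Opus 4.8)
The plan is to prove the lower bound by a bootstrapping argument built on Lemma~\ref{incr_lemma}, combined with strong induction on $n$. Write $L := \sum_{j=\ell}^k b_j p^j$ (the quantity to be bounded below) and $B := \sum_{j=\ell}^k b_j$ (the digit sum). To show $\lambda(n) \ge L$ it suffices to show that every $M \in \cM_n^*$ has $c(M) \ge L$. Since no column of $M$ is $\vec 0$, some row is nonzero, so $\row^*(M)$ contains a vector of norm at least $1$. I would then apply Lemma~\ref{incr_lemma} repeatedly: if I can establish that $p\,\lambda(n-t) > t$ for every $t$ with $1 \le t \le L-1$, then starting from any $\vec v \in \row^*(M)$ with $\|\vec v\| \le L-1$ the lemma produces a strictly larger vector, and since norms are bounded by $n$ the process must terminate at a vector of norm at least $L$, giving $c(M) \ge L$.

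First I would record the easy monotonicity fact that $\lambda$ is nondecreasing: deleting a column from any $M \in \cM_n^*$ gives some $M' \in \cM_{n-1}^*$, and the maximum-weight vector of $M'$ lifts to a vector of $M$ of at least the same weight, so $c(M) \ge c(M') \ge \lambda(n-1)$. Granting monotonicity, since $n-t \ge n-L+1$ for $t \le L-1$, the whole family of inequalities above follows from the \emph{single} inequality $p\,\lambda(n-L+1) \ge L$, because then $p\lambda(n-t) \ge p\lambda(n-L+1) \ge L > t$. The base case $n \le p$ is handled by Corollary~\ref{small_n}: there $n = b_0\sigma_0$ with $b_0 = n$, so $L = n = \lambda(n)$. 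For $n > p$ one checks $L \ge p \ge 2$, so $m := n-L+1$ satisfies $1 \le m < n$ and the induction hypothesis applies to it.

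The heart of the argument is thus the numerical inequality $p\,\lambda(n-L+1) \ge L$. Here I would exploit the identity $p\,\sigma_{j-1} = \sigma_j - 1$. Since $\sigma_j - p^j = \sigma_{j-1}$, we have $n-L = \sum_{j=\ell}^k b_j\sigma_{j-1}$; multiplying by $p$ gives the clean relation $pL = (p-1)n + B$. The same computation applies verbatim to any integer $x$ in canonical form, yielding $p\,L(x) = (p-1)x + B_x$, where $L(x)$ is the proposition's bound for $x$ and $B_x$ its digit sum. Feeding the induction hypothesis $\lambda(m) \ge L(m)$ into the target and substituting these two identities, the entire inequality $p\lambda(n-L+1) \ge L$ collapses to the single statement
$$B_m \ge B - (p-1),$$
i.e.\ that passing from $n$ to $m = n-L+1$ drops the digit sum by at most $p-1$.

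The main obstacle, and the only genuinely delicate part, will be to determine the canonical representation of $m = n-L+1$ and thereby verify $B_m \ge B-(p-1)$. Starting from $n-L = \sum_{j=\ell}^k b_j\sigma_{j-1} = \sum_{i=\ell-1}^{k-1} b_{i+1}\sigma_i$ (with the convention $\sigma_{-1}=0$), I would reindex to read off the digits of $n-L$ and then add $1$ to form $m$. The cases $\ell = 0$, $\ell = 1$, and $\ell \ge 2$ behave differently, and when $b_\ell = p$ the added unit forces a carry governed by $p\,\sigma_i = \sigma_{i+1}-1$, which lowers the digit sum; the hypothesis $1 \le b_\ell \le p$, which permits the value $p$ \emph{only} at the lowest position, is precisely what caps the total loss at $p-1$. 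Confirming through a short case analysis (or a direct computation of $B_m$) that no admissible carry ever costs more than $p-1$ in digit sum is where I expect the bulk of the work to lie; small instances such as $n = 5,6,7$ for $p = 2$ make the bound $B_m \ge B-(p-1)$ frequently tight, which is reassuring evidence that the constraints on the $b_j$ are being used exactly as needed.
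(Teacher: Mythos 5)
Your plan is sound and, once the deferred case analysis is written out, it yields a complete proof; structurally it runs on the same engine as the paper's argument \dash\ Lemma~\ref{incr_lemma}, monotonicity of $\lambda$, and an inductive appeal to the proposition at the number $m = n-L+1 = \sum_{i=\ell-1}^{k-1} b_{i+1}\sigma_i + 1$, which is exactly the quantity the paper feeds into its inductive hypothesis after bounding $n - \|\vec{v}\|$ from below \dash\ but your packaging differs in two ways worth noting. First, you induct on $n$ rather than on $k$; this is actually more robust: in the paper's sub-case $\ell = k$, $b_k = p$ (i.e.\ $n = p\sigma_k$), the rewritten sum collapses to $\sigma_k$ itself, whose largest sigma-index is $k$ rather than $k-1$, so the paper's induction hypothesis does not literally apply there, whereas your strong induction on $n$ covers it since $\sigma_k < p\sigma_k$. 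Second, your identity $pL(x) = (p-1)x + B_x$ replaces the paper's case-by-case explicit computation of $p\cdot(\text{inductive bound})$ with the single digit-sum inequality $B_m \ge B-(p-1)$, and you supply a proof of monotonicity of $\lambda$ (column deletion), which the paper invokes but never justifies. The one step you leave as expected work is precisely where the paper spends the bulk of its proof; under your reduction it is short, and it does close: if $b_\ell \le p-1$, the appended $+1$ either enters as a new digit $1\cdot\sigma_0$ (when $\ell \ge 2$) or increments the lowest digit to $b_1+1 \le p$ (when $\ell \le 1$), giving $B_m = B+1$ for $\ell \ge 1$ and $B_m = B - b_0 + 1 \ge B-(p-2)$ for $\ell = 0$; if $b_\ell = p$, the carry $p\sigma_{\ell-1}+1 = \sigma_\ell$ merges into the single digit $b_{\ell+1}+1 \le p$ (or creates the digit $1$ at position $k$ when $\ell = k$), giving $B_m = B-(p-1)$ exactly. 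So the loss is capped at $p-1$, with equality precisely in the carry case, as you predicted, and your argument compiles into a correct proof that is, if anything, slightly cleaner than the original.
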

\begin{proof}[Proof of Proposition~\ref{lin_comb}]
We proceed by induction on $k$.  When $k=0$, then $n = b_0 \sigma_0 = b_0.$  Since $1 \leq b_0 \leq p$,
then $\lambda(n) = b_0$ 
by Corollary~\ref{small_n}, thus, $\lambda(n) = b_0 p^0$
and the result holds.  Now suppose $k \geq 1$.  Our inductive hypothesis
will be if
$$n = \sum_{j=\ell}^m b_j \sigma_j,$$
where $b_m \geq 1$, and $0 \leq b_j \leq p-1$ for $j \neq \ell$, and $1 \leq b_\ell \leq p$, and $m < k$, then
$$\lambda(n) \geq  \sum_{j=\ell}^m b_j  p^j.$$  Let $M$  be an $\bF_p$-matrix with $n$ nonzero columns.
Suppose $\vec{v} \in \row^*(M)$ with $$\|\vec{v}\| < \sum_{j=\ell}^k b_j p^j.$$  Then
\begin{align*}
n-\|\vec{v}\| > n - \sum_{j=\ell}^k b_j p^j  &= \sum_{j=\ell}^k b_j \sigma_j - \sum_{j=\ell}^k b_j p^j \\
							     &= \sum_{j=\ell}^k b_j (\sigma_j - p^j) \\
							     &= \sum_{j=\ell}^k b_j \sigma_{j-1},
\end{align*}
where we define $\sigma_{-1}= 0$ to handle the case $j=0$, since $\sigma_0 - p^0 = 0.$  Thus,
$$n-\|\vec{v}\| \geq \sum_{\ell -1 \leq j \leq k-1} b_{j+1} \sigma_j +1.$$  We want to determine a lower
bound on $p\lambda\left(\sum_{j=\ell -1}^{k-1} b_{j+1} \sigma_j +1\right)$ that allows us
to conclude that $p\lambda(n-\|\vec{v}\|) > \|\vec{v}\|$ so that we may use Lemma~\ref{incr_lemma}.
We consider the case where $b_\ell = p$  and the case where $1 \leq b_\ell \leq p-1$
separately.

Suppose $b_\ell = p$.  Then
\begin{align*}
\sum_{\ell -1 \leq j \leq k-1} b_{j+1} \sigma_j +1 
     &= \sum_{\ell \leq j \leq k-1} b_{j+1} \sigma_j + b_\ell \sigma_{\ell-1}+1 \\
     &= \sum_{\ell \leq j \leq k-1} b_{j+1} \sigma_j + (p \sigma_{\ell-1}+1) \\
     &= \sum_{\ell \leq j \leq k-1} b_{j+1} \sigma_j + \sigma_{\ell} \\
     &= \sum_{\ell+1 \leq j \leq k-1} b_{j+1} \sigma_j + b_{\ell+1} \sigma_{\ell}+\sigma_\ell \\
     &= \sum_{\ell+1 \leq j \leq k-1} b_{j+1} \sigma_j + (b_{\ell+1}+1) \sigma_{\ell}.
\end{align*}
Notice that our sum satisfies all of the criteria for the inductive hypothesis. Specifically,
the coefficient of its lowest sigma-term $\sigma_\ell$ is $b_{\ell+1} +1$, which satisfies
$1 \leq b_{\ell+1} +1 \leq p$; the coefficient of $\sigma_j$ is $b_{j+1}$ and $0 \leq b_{j+1} \leq p-1$
for $j \neq \ell$; the coefficient of the largest sigma-term $\sigma_{k-1}$ is $b_k$, which 
satisfies $b_k \geq 1$; and finally, the index of its largest sigma term is $k-1$ which is strictly
less than $k$.  Therefore, by the inductive hypothesis,
\begin{align*}
p \cdot\lambda\left(\sum_{\ell+1 \leq j \leq k-1} b_{j+1} \sigma_j + (b_{\ell+1}+1) \sigma_{\ell}\right)
  & \geq  p \left(\sum_{\ell+1 \leq j \leq k-1} b_{j+1} p^j + (b_{\ell+1}+1) p^\ell\right) \\
  &= \sum_{\ell+1 \leq j \leq k-1} b_{j+1} p^{j+1} + (b_{\ell+1}+1) p^{\ell+1}  \\
  &= \sum_{\ell \leq j \leq k-1} b_{j+1} p^{j+1} + p \cdot p^\ell  \\
  &= \sum_{\ell+1 \leq j \leq k} b_{j} p^{j} + p \cdot p^\ell  \\
  &=\sum_{\ell \leq j \leq k} b_{j} p^{j},
\end{align*}
where the last equality holds because $b_\ell = p$.  Since $\lambda$  is a nondecreasing function,
our previous work implies
\begin{align*}
p\lambda(n-\|\vec{v}\|) 
    &\geq p \cdot\lambda\left(\sum_{\ell+1 \leq j \leq k-1} b_{j+1} \sigma_j + (b_{\ell+1}+1) \sigma_{\ell}\right)\\
    & \geq \sum_{\ell \leq j \leq k} b_{j} p^{j} > \|\vec{v}\|.
\end{align*}
Thus, by Lemma~\ref{incr_lemma}, there is a vector $\vec{z}\in \row^*(M)$ such that $\|\vec{z}\| > \|\vec{v}\|.$

Now suppose $1 \leq b_\ell \leq p-1$.  Recall that our sum is 
$$\sum_{\ell -1 \leq j \leq k-1} b_{j+1} \sigma_j +1\ 
 = \sum_{\ell -1 \leq j \leq k-1} b_{j+1} \sigma_j +1\cdot\sigma_0.$$  
In this case, the smallest sigma-term
is $\sigma_0$, and its coefficient is $b_1+1$, where $b_1= 0$ if $\ell \geq 2$.  We note that
our sum satisfies all of the criteria for the inductive hypothesis.
Since each $b_j$ satisfies $0 \leq b_j \leq p-1$, then $1 \leq b_1 + 1 \leq p$;
when $j \geq 1$, the coefficient of each $\sigma_j$ is $b_{j+1}$ and $0 \leq b_{j+1} \leq p-1$;
the coefficient of the largest sigma-term $\sigma_{k-1}$ is $b_k$, which 
satisfies $b_k \geq 1$; and finally, the index of its largest sigma term is $k-1$ which is strictly
less than $k$.  

When $\ell \geq 2$, %our sum is $\sum_{j=\ell-1}^{k-1} b_{j+1} \sigma_j +1$, 
the coefficient of $\sigma_0$ is 1, and we apply
 the inductive hypothesis to obtain
 \begin{align*}
 p \cdot\lambda\left(\sum_{\ell-1 \leq j \leq k-1} b_{j+1} \sigma_j +1\right)
    & \geq p \left(\sum_{\ell-1 \leq j \leq k-1} b_{j+1} p^j +1\right) \\
    & = \sum_{\ell-1 \leq j \leq k-1} b_{j+1} p^{j+1} +p \\
    & = \sum_{\ell \leq j \leq k} b_{j} p^{j} +p.
 \end{align*}
Thus, 
\begin{align*}
p\lambda(n-\|\vec{v}\|) 
    &\geq  p \cdot\lambda\left(\sum_{\ell-1 \leq j \leq k-1} b_{j+1} \sigma_j +1\right)\\
    & \geq \sum_{\ell \leq j \leq k} b_{j} p^{j} + p > \|\vec{v}\|.
\end{align*}
When $\ell \in\{0,1\}$, %then the coefficient of $\sigma_0$ is $b_1+1$
our sum is $\sum_{j=0}^{k-1} b_{j+1} \sigma_j +1$, and we apply the inductive hypothesis
to obtain
\begin{align*}
 p \cdot\lambda\left(\sum_{0 \leq j \leq k-1} b_{j+1} \sigma_j +1\right)
    &=p \cdot\lambda\left(\sum_{1 \leq j \leq k-1} b_{j+1} \sigma_j +(b_1+1)\right) \\
    & \geq p \left(\sum_{1 \leq j \leq k-1} b_{j+1} p^{j} +b_1+1\right) \\
    & = \sum_{1 \leq j \leq k-1} b_{j+1} p^{j+1} +b_1p+p \\
    & = \sum_{1\leq j \leq k} b_{j} p^{j} +p.
 \end{align*}
 Thus, 
\begin{align*}
p\lambda(n-\|\vec{v}\|) 
    &\geq  p \cdot\lambda\left(\sum_{0 \leq j \leq k-1} b_{j+1} \sigma_j +1\right)\\
    & \geq \sum_{1 \leq j \leq k} b_{j} p^{j} + p \\
    & > \sum_{\ell \leq j \leq k} b_{j} p^{j}  > \|\vec{v}\|.
\end{align*}
Thus, by Lemma~\ref{incr_lemma}, there is a vector $\vec{z}\in \row^*(M)$ such that $\|\vec{z}\| > \|\vec{v}\|.$
%Therefore for any $\ell \geq 0$, $p\lambda(n-\|\vec{v}\|) > \|\vec{v}\|$, and 
%Lemma~\ref{incr_lemma} implies there is a vector $\vec{z}\in \row^*(M)$ such that $\|\vec{z}\| > \|\vec{v}\|.$
Therefore $\lambda(n) \geq  \sum_{j=\ell}^k b_j p^j.$
\end{proof}

%\noindent (Now I need to check if every $n \geq 1$ can be written as described in Proposition~\ref{lin_comb}.)

Now we show that every $n \geq 1$ can be written in the form described in Proposition~\ref{lin_comb}.

\begin{claim}\label{n_as_lin_comb_of_sigmas}
Let $n \in\integers^+$.  Suppose $n < \sigma_{k+1}$. % i.e., $n \leq \sigma_{k+1} -1= p\sigma_k$.
Let $n_{k+1} = n$, and for $0\leq j \leq k$, assuming $n_{j+1}$ is defined,
let $b_j$ be the largest integer such that $b_j\sigma_j \leq n_{j+1}$, and let $n_j = n_{j+1} - b_j\sigma_j$.
Then for $0 \leq j \leq k$, we have $0 \leq n_{j+1} \leq p\sigma_j$ and $0 \leq b_j \leq p$.
Moreover, 
 $$n = \sum_{j=0}^k b_j \sigma_j,$$
and if $b_j = p$, then $b_i = 0$ for $i < j$.
%and $0 \leq b_j \leq p$ for $0 \leq j \leq k$.  Moreover, 
\end{claim}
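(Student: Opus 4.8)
The plan is to analyze the greedy construction directly, proving the stated bounds by a downward induction on $j$ and then reading off the remaining conclusions. The engine of the whole argument is the identity $\sigma_j = p\sigma_{j-1} + 1$ (equivalently $\sigma_{k+1} = p\sigma_k + 1$), which is immediate from $\sigma_j = \sum_{i=0}^j p^i$ since $p\sigma_{j-1} = \sum_{i=1}^j p^i = \sigma_j - 1$. It is this identity that lets a bound on $n_{j+1}$ at one level translate into a bound of the same shape on $n_j$ at the next level.

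First I would establish the invariant $0 \leq n_{j+1} \leq p\sigma_j$ by induction downward from $j = k$ to $j = 0$. The base case $j=k$ is exactly the hypothesis: since $\sigma_{k+1} = p\sigma_k + 1$, the bound $n_{k+1} = n < \sigma_{k+1}$ gives $n_{k+1} \leq p\sigma_k$, while $n_{k+1} = n \geq 1 > 0$. For the inductive step, assume $0 \leq n_{j+1} \leq p\sigma_j$. Because $b_j$ is the largest integer with $b_j\sigma_j \leq n_{j+1}$, we have $b_j = \lfloor n_{j+1}/\sigma_j\rfloor$, so $0 \leq b_j \leq p$ (this proves the claimed bound on $b_j$), and $n_j = n_{j+1} - b_j\sigma_j$ satisfies $0 \leq n_j < \sigma_j$. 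Applying $\sigma_j = p\sigma_{j-1}+1$ upgrades $n_j < \sigma_j$ to $n_j \leq p\sigma_{j-1}$, which is precisely the invariant at the next level. This closes the induction and simultaneously yields both displayed inequalities.

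Next I would obtain the representation $n = \sum_{j=0}^k b_j\sigma_j$ by telescoping the recursion $n_{j+1} = b_j\sigma_j + n_j$, which gives $n = n_{k+1} = \sum_{j=0}^k b_j\sigma_j + n_0$. It remains to check $n_0 = 0$: at $j=0$ we have $\sigma_0 = 1$, so $b_0$ is the largest integer with $b_0 \leq n_1$, forcing $b_0 = n_1$ and hence $n_0 = n_1 - b_0 = 0$.

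Finally, for the cascade condition, I would observe that if $b_j = p$ then $p\sigma_j = b_j\sigma_j \leq n_{j+1} \leq p\sigma_j$ by the invariant, so $n_{j+1} = p\sigma_j$ exactly and therefore $n_j = n_{j+1} - b_j\sigma_j = 0$. Once some $n_j = 0$, a trivial downward induction shows $b_i = 0$ and $n_i = 0$ for all $i < j$, since each such $b_i$ is the largest integer with $b_i\sigma_i \leq n_{i+1} = 0$. I do not anticipate any deep difficulty; the only point needing care, and the closest thing to an obstacle, is keeping the floor/remainder estimates aligned with $\sigma_j = p\sigma_{j-1}+1$ so that the invariant is genuinely preserved\dash in particular using integrality to turn the strict bound $n_j < \sigma_j$ into $n_j \leq p\sigma_{j-1}$\dash rather than any genuine combinatorial subtlety.
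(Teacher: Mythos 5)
Your proof is correct and follows essentially the same route as the paper's: a downward induction on $j$ establishing the invariant $0 \leq n_{j+1} \leq p\sigma_j$ via the identity $\sigma_j - 1 = p\sigma_{j-1}$, telescoping $n_{j+1} = b_j\sigma_j + n_j$ with the observation $n_0 = 0$, and deriving the cascade condition from $b_j = p$ forcing $n_{j+1} = p\sigma_j$. The only differences are cosmetic (e.g., writing $b_j = \lfloor n_{j+1}/\sigma_j \rfloor$ explicitly, and spelling out the trivial downward induction for the cascade, which the paper leaves implicit).
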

\begin{proof}[Proof of Claim~\ref{n_as_lin_comb_of_sigmas}]
Suppose $n \in\integers^+$ and $n < \sigma_{k+1}$.  Then $n \leq \sigma_{k+1} -1= p\sigma_k$.
Let $n_{k+1} = n$, and for $0\leq j \leq k$, assuming $n_{j+1}$ is defined,
let $b_j$ be the largest integer such that $b_j\sigma_j \leq n_{j+1}$, and let $n_j = n_{j+1} - b_j\sigma_j$.
We proceed by  induction on  $k-j$.  Assume $0\leq n_{j+1} \leq p\sigma_j$ and let $b_j$ and $n_j$
be defined as above.  Since $0 \leq n_{j+1}$, then $b_j \geq 0$.  Since $n_{j+1} \leq p\sigma_j$ and 
$b_j\sigma_j \leq n_{j+1}$, then $b_j\sigma_j \leq p\sigma_j$. Thus, since $\sigma_j \geq 1$, we have
$b_j \leq p.$  Since $b_j\sigma_j \leq n_{j+1}$ and $n_j = n_{j+1}-b_k\sigma_k$, then $n_j \geq 0.$
Since $n_{j+1} < (b_j+1)\sigma_j$, then $n_{j+1}-b_j\sigma_j < \sigma_j$, i.e., 
$n_j \leq \sigma_j-1 = p\sigma_{j-1}$.
Therefore, by induction, $0 \leq n_{j+1} \leq p\sigma_j$ and $0 \leq b_j \leq p$  for $0 \leq j \leq k$.

Now suppose $b_j = p$.  Since  $b_j \sigma_j \leq n_{j+1} \leq p \sigma_j$, then $n_{j+1} = p\sigma_j$ and $n_j = n_{j+1} - b_j\sigma_j =0$.
Moreover, $b_i = 0$ and $n_i = 0$ for all $i<j$.

To see that $n = \sum_{j=0}^k b_j \sigma_j,$ observe that $b_j\sigma_j = n_{j+1}-n_j$ for $0\leq j\leq k$, because of the definition of $n_j$.
Thus, 
$$\sum_{j=0}^k b_j \sigma_j = \sum_{j=0}^k (n_{j+1}-n_j) = n_{k+1}-n_0 = n-n_0.$$
Since $0 \leq n_1 \leq p\sigma_0 = p$, then, by definition, $b_0 = n_1$ and $n_0 = n_1 - b_0\sigma_0 = n_1-n_1(1) = 0.$
Thus, $\sum_{j=0}^k b_j \sigma_j= n$
\end{proof}

With Proposition~\ref{lin_comb} and Claim~\ref{n_as_lin_comb_of_sigmas}, we have established a lower bound
on $\lambda(n)$ for all $n \geq 1$.  We need to prove the corresponding upper bound.  We will do so by 
constructing a matrix with $n$ columns whose capacity equals the lower bound given in Proposition~\ref{lin_comb}.
We begin by constructing such a matrix for certain values of $n$, namely, when $n = \sigma_k$ for some $k \geq 0$.

For each integer $k\geq 0$, we define a $(k+1)\times\sigma_k$ matrix $B_k$, recursively, as follows.  
The matrix $B_0$ is the $1\times 1$ matrix whose sole entry is 1.  For $k\geq 1$, $B_k$ can be defined
as a block matrix with a ``row" consisting of $p$ copies of $B_{k-1}$ followed by a $k\times 1$ column of 0's, 
then one more row of dimensions $1\times \sigma_k$ with its first $\sigma_{k-1}$ entries equal to 0 (below the first
$B_{k-1}$), then $\sigma_{k-1}$ entries equal to 1 (below the next $B_{k-1}$), \ldots, then $\sigma_{k-1}$
entries equal to $p-1$ (below the last $B_{k-1}$), and one last entry equal to 1, i.e.,
%$\begin{bmatrix}
%B_{k-1} & B_{k-1} & \cdots & B_{k-1} & \begin{array}{c}0 \\ \vdots \\0 \end{array} \\
%0\dash 0 & 1\dash 1 & \cdots & (p-1) \dash (p-1) & 1
%\end{bmatrix}$
%$$B_k = \left[\begin{array}{c|c|c|c|c}
%\text{\Large $B_{k-1}$} & \text{\Large $B_{k-1}$} & \cdots 
%& \text{\Large $B_{k-1}$} & \begin{array}{c}0 \\ \vdots \\0 \end{array} \\
%\hline
%0\dash\dash 0 & 1\dash\dash 1 & \cdots & (p-1) \dash\dash (p-1) & 1
%\end{array}\right].$$
$$B_k = \left[\begin{array}{c|c|c|c|c}
\text{\Large $B_{k-1}$} & \text{\Large $B_{k-1}$} & \cdots 
& \text{\Large $B_{k-1}$} & \begin{array}{c}0 \\ \vdots \\0 \end{array} \\
\hline
0\ldots 0 & 1\ldots 1 & \cdots & (p-1) \ldots (p-1) & 1
\end{array}\right].$$
For $k\geq 1$, let $B'_k$ be the $k\times \sigma_{k}$ matrix obtained from $B_k$ by removing its last row,
i.e., 
$$B'_k = \left[\begin{array}{c|c|c|c|c}
\text{\Large $B_{k-1}$} & \text{\Large $B_{k-1}$} & \cdots 
& \text{\Large $B_{k-1}$} & \begin{array}{c} 0 \\ \vdots \\ 0 \end{array} 
\end{array}\right].$$

\begin{lemma}\label{Bk_lemma}
For each $\vec{v} \in\row^*(B_k)$, $\|\vec{v}\| = p^k$.  %Thus, $c(B_k) = p^k$.
\end{lemma}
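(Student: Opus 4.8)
The plan is to induct on $k$, exploiting the recursive block structure of $B_k$. For the base case $k=0$, the matrix $B_0=[1]$ has $\row^*(B_0)=\{c : c\in\F_p\setminus\{0\}\}$, and every such vector has exactly $1=p^0$ nonzero entry, so the claim holds.

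For the inductive step I would first pin down exactly how an arbitrary element of $\row(B_k)$ decomposes. The top $k$ rows of $B_k$ are precisely $B'_k=[B_{k-1}\mid\cdots\mid B_{k-1}\mid\vec{0}]$, so any linear combination of them equals $[\vec{x}\mid\vec{x}\mid\cdots\mid\vec{x}\mid 0]$ for some $\vec{x}\in\row(B_{k-1})$, consisting of $p$ identical copies of $\vec{x}$ followed by a single $0$. Adding $c$ times the bottom row (for $c\in\F_p$) changes the $i$-th block to $\vec{x}+ci\,\vec{1}$, where $\vec{1}$ is the all-ones vector of length $\sigma_{k-1}$ and $i=0,\ldots,p-1$ indexes the blocks, and makes the trailing entry equal to $c$. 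Thus a general $\vec{v}\in\row(B_k)$ consists of the $p$ blocks $\vec{x}+ci\,\vec{1}$ together with the final entry $c$, as $\vec{x}$ ranges over $\row(B_{k-1})$ and $c$ over $\F_p$.

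I would then split into two cases. If $c=0$, then $\vec{v}=[\vec{x}\mid\cdots\mid\vec{x}\mid 0]$, so $\|\vec{v}\|=p\,\|\vec{x}\|$; since $\vec{v}\neq\vec{0}$ forces $\vec{x}\in\row^*(B_{k-1})$, the inductive hypothesis gives $\|\vec{x}\|=p^{k-1}$ and hence $\|\vec{v}\|=p^k$. If $c\neq 0$, the trailing entry contributes $1$, and I count the nonzero block entries coordinate by coordinate. Fixing a position $t\in\{1,\ldots,\sigma_{k-1}\}$ inside a block, the values at that position across the $p$ blocks are $x_t+ci$ for $i=0,\ldots,p-1$. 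Because $c\neq 0$, the map $i\mapsto ci$ is a bijection of $\F_p$, so these $p$ values are exactly the $p$ elements of $\F_p$, each occurring once; in particular exactly one equals $0$. Hence each of the $\sigma_{k-1}$ within-block positions contributes exactly $p-1$ nonzero entries, for a total of $(p-1)\sigma_{k-1}$ from the blocks.

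Combining, in the case $c\neq 0$ we obtain $\|\vec{v}\|=(p-1)\sigma_{k-1}+1$, and the routine identity $(p-1)\sigma_{k-1}+1=(p-1)\frac{p^k-1}{p-1}+1=p^k$ completes the step. The only genuinely nontrivial step is the coordinate-wise counting in the case $c\neq 0$: the crucial observation is that reading a single within-block coordinate across the $p$ blocks yields each field element exactly once, which is precisely where the hypotheses that $B_k$ contains $p$ copies of $B_{k-1}$ and that $c\neq 0$ are used. Everything else is bookkeeping on the block decomposition and elementary arithmetic with $\sigma_{k-1}$.
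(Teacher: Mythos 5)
Your proof is correct and takes essentially the same approach as the paper: induction on $k$, decomposition of any element of $\row(B_k)$ into $p$ identical copies of a vector $\vec{x}\in\row(B_{k-1})$ plus $c$ times the last row, and a case split on whether $c=0$, with the key counting step resting on the fact that $i\mapsto ci$ is a bijection of $\F_p$ when $c\neq 0$. The only cosmetic difference is that the paper tallies the zero coordinates block-by-block (summing, over blocks $j$, the coordinates of $\vec{v}_0$ equal to $p-cj$), whereas you tally nonzeros position-by-position across the $p$ blocks; it is the same double count read in transpose.
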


\begin{proof}
We proceed by induction on $k$.  When $k=0$, the result is trivial. 
Let $k\geq 1$.  Assume the result for $j<k$.  Let $\vec{v} \in \row^*(B_k)$.
We first consider the case where $\vec{v} \in \row^*(B'_k)$.
Then we can write 
%$$\vec{v} = (v_{0,1}, \ldots, v_{0,\sigma_{k-1}}, v_{1,1}, \ldots, v_{1,\sigma_{k-1}}, \ldots, v_{p-1,1}, \ldots, v_{p-1,\sigma_{k-1}}, 0),$$
$$\vec{v} = 
(v^{(0)}_{1}, \ldots, v^{(0)}_{\sigma_{k-1}}, v^{(1)}_{1}, \ldots, v^{(1)}_{\sigma_{k-1}}, \ldots, v^{(p-1)}_{1}, \ldots, v^{(p-1)}_{\sigma_{k-1}}, 0).$$
To shorten notation, we will  write
\begin{equation}\label{short_notation}
\vec{v} = (\vec{v}_0, \vec{v}_1, \ldots, \vec{v}_{p-1}, 0),
\end{equation}
%where $\vec{v}_i$ represents the coordinates $v_{i,1}, \ldots, v_{i,\sigma_{k-1}}.$
where $\vec{v}_i = (v^{(i)}_{1}, \ldots, v^{(i)}_{\sigma_{k-1}})$ for $0 \leq i \leq p-1$. Technically, 
in equation~\eqref{short_notation}, $\vec{v}_i$ simply represents the coordinates $v^{(i)}_{1}, \ldots, v^{(i)}_{\sigma_{k-1}}$.
We observe that  $\vec{v}_0 = \vec{v}_1 = \cdots = \vec{v}_{p-1}$
%based on how $B_k$ is constructed and how $\vec{v}$ is obtained.
based on how $B'_k$ and $\vec{v}$ are defined. 
We also observe that $\vec{v}_i \in \row^*(B_{k-1})$. By the inductive hypothesis, 
$\|\vec{v}_i\| = p^{k-1}$, therefore, $\|\vec{v}\| = p^k$.

We now show the result holds for  $\vec{w} \in \row^*(B_k)-\row^*(B'_k)$.  Let $\vec{u}$ be the last row in $B_k$,
i.e., $\vec{u} = (0, \ldots, 0, 1, \ldots, 1, \ldots, p-1, \ldots, p-1,1)$.  
%For brevity, we will slightly abuse notation and
%write $\vec{u} = (0\vec{1},\vec{1},\ldots,(p-1)\vec{1},1)$ where $\vec{1}$ is a $\sigma_{k-1}$ dimensional vector.
We observe that $\|\vec{u}\| = \sigma_k-\sigma_{k-1} = p^k$,
thus, the result holds when $\vec{w}  = \vec{u}.$  To illustrate our argument, we next consider the special case where
$\vec{w} = \vec{v} + \vec{u}$ for some $\vec{v} \in \row^*(B'_k)$.  
Again, we slightly abuse notation and write $\vec{u} = (\vec{0}, \vec{1}, \ldots, (p-1)\vec{1}, 1)$, where $\vec{c}$ (or $c\vec{1}$)
represents the $\sigma_{k-1}$-dimensional vector $(c, \ldots, c)$.
Then we can write $\vec{v} + \vec{u} = (\vec{v}_0+\vec{0}, \vec{v}_1+\vec{1}, \ldots, \vec{v}_{p-1}+(p-1)\vec{1}, 1)$.
Since we are working modulo~$p$, a coordinate of $\vec{v}_j+ j\vec{1}$ is congruent to 0 if and only if the corresponding coordinate
of $\vec{v}_j$ is congruent to $p-j$.
%This notation helps reveal that 
Thus, we can  count 
the total number of coordinates
that are congruent to 0  in $\vec{v} + \vec{u}$  
%equals the number of coordinates that are equal to 0 in $\vec{v}_0$,
%plus the number of coordinates that equal 1 in $\vec{v}$
as follows
\begin{equation}\label{count_zeros}
\left(\begin{array}{c}
\mbox{Total \# of 0-coordinates} \\
\mbox{in $\vec{v}+ \vec{u}$}
\end{array}  \right) = \sum_{j=0}^{p-1} ( \textrm{\# of $(p-j)$-coordinates in $\vec{v}_j$}).
\end{equation}
%(Interchange $p$-coordinates and $0$-coordinates as necessary, since we are working modulo~${p}$.)
Since $\vec{v}_0 = \vec{v}_1 = \cdots = \vec{v}_{p-1}$, equation~\eqref{count_zeros}
reduces to 
\begin{equation*}\label{count_zeros_simple}
\left(\begin{array}{c}
\mbox{Total \# of 0-coordinates} \\
\mbox{in $\vec{v}+\vec{u}$}
\end{array}  \right) =  
\left(\begin{array}{c}
\mbox{Total \# of coordinates} \\
\mbox{in $\vec{v}_0$}
\end{array}  \right) = \sigma_{k-1}.
\end{equation*}
Thus, $\|\vec{v}+\vec{u}\| = \sigma_k - \sigma_{k-1}= p^k.$  In general, $\vec{w} \in \row^*(B_k)-\row^*(B'_k)$
satisfies $\vec{w} = \vec{v} + c\vec{u}$ for some $\vec{v} \in \row^*(B'_k)$ and $c \not\equiv 0 \pmod{p}$.
In this case, $\vec{w} = (\vec{v}_0+c\vec{0}, \vec{v}_1+c\vec{1}, \ldots, \vec{v}_{p-1}+c(p-1)\vec{1}, 1)$, and
equation~\eqref{count_zeros} becomes
\begin{equation}\label{new_count_zeros}
\left(\begin{array}{c}
\mbox{Total \# of 0-coordinates} \\
\mbox{in $\vec{w}$}
\end{array}  \right) = \sum_{j=0}^{p-1} ( \textrm{\# of $(p-cj)$-coordinates in $\vec{v}_j$}),
\end{equation}
where arithmetic is modulo~$p$.  Since $\vec{v}_0 = \vec{v}_1 = \cdots = \vec{v}_{p-1}$,
we obtain
$$
\left(\begin{array}{c}
\mbox{Total \# of 0-coordinates} \\
\mbox{in $\vec{w}$}
\end{array}  \right) = \sum_{j=0}^{p-1} ( \textrm{\# of $(p-c j)$-coordinates in $\vec{v}_0$}).
$$
Since $p$ is prime and $c\not\equiv 0 \pmod{p}$, then
$\{p, p-c, p-2c, \ldots, p-(p-1)c\}$ is a equivalent to
$\{0, 1, \ldots, p-1\}$ modulo~$p$, thus,
$$
\left(\begin{array}{c}
\mbox{Total \# of 0-coordinates} \\
\mbox{in $\vec{w}$}
\end{array}  \right) =  
\left(\begin{array}{c}
\mbox{Total \# of coordinates} \\
\mbox{in $\vec{v}_0$}
\end{array}  \right) = \sigma_{k-1}.
$$
Therefore, $\|\vec{w}\| = \sigma_k-\sigma_{k-1} = p^k$,
and we can conclude that for each $\vec{v} \in\row^*(B_k)$, $\|\vec{v}\| = p^k$.
%Thus, $c(B_k) = p^k$.
\end{proof}

Since $B_k$ has $\sigma_k$ columns, Lemma~\ref{Bk_lemma} implies that
$\lambda(n) \leq p^k$ when $n= \sigma_k$ for some nonnegative integer $k$.
We would like a similar upper bound on $\lambda(n)$ for all positive integers $n$.  
Thus, we provide the following proposition.
\begin{proposition}\label{sum_Bk}
If $n = \sum_{j=0}^k b_j \sigma_j$, %where $0 \leq b_j$ for $0 \leq j \leq k$,
then
$$\lambda(n) \leq \sum_{j=0}^k  b_j p^j.$$
\end{proposition}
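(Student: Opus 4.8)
The plan is to prove the upper bound by exhibiting, for a given representation $n = \sum_{j=0}^k b_j\sigma_j$, an explicit matrix $M \in \cM_n^*$ whose capacity is at most $\sum_{j=0}^k b_j p^j$; since $\lambda(n) = \min_{M\in\cM_n^*} c(M) \le c(M)$, this immediately gives the claim. The natural candidate is the block-diagonal matrix assembled from the matrices $B_j$ already constructed: for each $j$ with $b_j \ge 1$, place $b_j$ copies of $B_j$ along the diagonal, with all off-diagonal blocks equal to $\vec{0}$, so that no two diagonal blocks share a column or a row.

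First I would verify that $M$ really lies in $\cM_n^*$. Each $B_j$ contributes $\sigma_j$ columns, so $M$ has $\sum_{j=0}^k b_j\sigma_j = n$ columns. Every column of $M$ meets exactly one diagonal block, where it restricts to a (nonzero) column of some $B_j$ and is $\vec{0}$ elsewhere, so no column of $M$ is $\vec{0}$. Finally, since $B_j$ has $j+1 \le \sigma_j$ rows, the total number of rows is $\sum_{j=0}^k b_j(j+1) \le \sum_{j=0}^k b_j\sigma_j = n \le p^n$, and it is at least $1$ because $n \ge 1$ forces some $b_j \ge 1$. Hence $M \in \cM_n^*$.

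The heart of the argument is bounding $c(M)$. Because $M$ is block diagonal, a linear combination of its rows acts independently on each block, so every $\vec{w}\in\row(M)$ decomposes as a concatenation $\vec{w} = (\vec{w}_1, \ldots, \vec{w}_r)$ in which each component $\vec{w}_i$ lies in the rowspace of the corresponding copy of some $B_{j_i}$, and $\|\vec{w}\| = \sum_{i} \|\vec{w}_i\|$. By Lemma~\ref{Bk_lemma}, each $\vec{w}_i$ is either $\vec{0}$ or has weight exactly $p^{j_i}$, so in all cases $\|\vec{w}_i\| \le p^{j_i}$. Summing over the blocks (of which there are $b_j$ equal to $B_j$) yields $\|\vec{w}\| \le \sum_{j=0}^k b_j p^j$ for every $\vec{w}\in\row(M)$, whence $c(M) \le \sum_{j=0}^k b_j p^j$.

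Combining the two estimates gives $\lambda(n) \le c(M) \le \sum_{j=0}^k b_j p^j$, as desired. The only step that needs care is the decomposition of $\row(M)$ into the rowspaces of the individual diagonal blocks; once that is in place, the weight is additive across blocks and the fixed-weight property of Lemma~\ref{Bk_lemma} closes the estimate. I do not expect a genuine obstacle here, only the bookkeeping of indexing the $\sum_{j} b_j$ diagonal blocks.
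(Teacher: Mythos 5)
Your proof is correct, and your construction is genuinely different from the paper's. The paper keeps all blocks coupled through a common set of $k+1$ rows: since $B_j$ has only $j+1$ rows, it first pads each $B_j$ to a $(k+1)\times\sigma_j$ matrix $\B{k}{j}$ by repeating the last row, observes that $\row^*(\B{k}{j})=\row^*(B_j)$, and then concatenates the $b_j$ copies horizontally into a $(k+1)\times n$ matrix, so that every vector of $\row(M)$ restricts to the \emph{same} linear combination on every block; it then argues that every $\vec{v}\in\row^*(M)$ has weight exactly $\sum_j b_j p^j$, giving $c(M)=\sum_j b_j p^j$. Your matrix is block-diagonal instead, which decouples the blocks: $\row(M)$ is just the set of concatenations of vectors from the rowspaces of the individual blocks, so the decomposition and the additivity of weight are immediate, and you never need the padded matrices $\B{k}{j}$ \dash\ at the price of a matrix with $\sum_j b_j(j+1)$ rows rather than $k+1$. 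One further point in your favor: you allow a component $\vec{w}_i$ to vanish and use only $\|\vec{w}_i\|\le p^{j_i}$, whereas the paper asserts that each component lies in $\row^*(B_j)$ and hence that every nonzero row vector has weight exactly $\sum_j b_j p^j$. That assertion is not quite right even for the paper's own matrix: for $p=2$ and $n=\sigma_0+\sigma_1=4$, the sum of the two rows of $\bigl[\,\B{1}{0}\;|\;\B{1}{1}\,\bigr]$ is $(0,1,0,1)$, which vanishes on the first block and has weight $2<3$. The paper's conclusion survives because the maximum is still attained (e.g.\ by a single row), but since the proposition only needs the upper bound $c(M)\le\sum_j b_j p^j$, your inequality-based bookkeeping is exactly the right level of care.
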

\begin{proof}[Proof of Proposition~\ref{sum_Bk}]
We will construct a matrix $M$ with $n$ columns such that $c(M) = \sum_{j=0}^k b_j p^j$.
The matrix $M$ will essentially be a block matrix with $b_j$ copies of $B_j$ for $0 \leq j \leq k$.
%and $c(M) = \sum_{j=0}^k b_j \sigma_k$.
However, the number of rows of $B_j$ does not equal the number of rows of $B_\ell$ when $j \neq \ell$.
Thus, for $0 \leq j \leq k$, we define the $(k+1)\times \sigma_j$ matrix $\B{k}{j}$ where the first $j$ rows of $\B{k}{j}$ 
match the first $j$ rows of $B_j$ and the last $k+1-j$ rows of $\B{k}{j}$ all equal the last row of $B_j$.
Thus, $\B{k}{0}$ is a $(k+1)\times 1$ column of 1's, and for $1 \leq j \leq k$, 
$$\B{k}{j} = \left[\begin{array}{c|c|c|c|c}
\text{\Large $B_{j-1}$} & \text{\Large $B_{j-1}$} & \cdots 
& \text{\Large $B_{j-1}$} & \begin{array}{c}0 \\ \vdots \\0 \end{array} \\
\hline
0\ldots 0 & 1\ldots 1 & \cdots & (p-1) \ldots (p-1) & 1 \\
\hline
0\ldots 0 & 1\ldots 1 & \cdots & (p-1) \ldots (p-1) & 1 \\
\hline
 \vdots &    \vdots   &  \cdots  &  \vdots   &  \vdots \\
 \hline
0\ldots 0 & 1\ldots 1 & \cdots & (p-1) \ldots (p-1) & 1 
\end{array}\right]$$
where the last row is repeated $(k+1)-j$ times. %, since the dimension of $B_{j-1}$ is $j \times \sigma_j$.
After comparing $\B{k}{j}$ with $B_j$, it is easy to see that 
%the proof of Lemma~\ref{Bk_lemma}
%also implies that $\|\vec{v}\| = p^j$ for each $\vec{v} \in \row^*(\B{k}{j})$.
$\row^*(\B{k}{j}) = \row^*(B_j)$.

Let $n$ be a positive integer such that $n = \sum_{j=0}^k b_j \sigma_j.$ 
%$n < \sigma_{k+1}$.
%By Claim~\ref{n_as_lin_comb_of_sigmas}, we can write 
%$$n = \sum_{j=0}^k b_j \sigma_j,$$
% where  $0 \leq b_j \leq p$ for $0 \leq j \leq k$.
Let $M$ be the $(k+1)\times n$ matrix defined as  
a block matrix with $b_j$ copies of $\B{k}{j}$ for $0 \leq j \leq k$, where the blocks appear in
a single row in
nondecreasing order according to their lower index, i.e.,
$$M = \left[\begin{array}{ccc|ccc|c|ccc}
\undermat{b_0}{\B{k}{0} &\cdots & \B{k}{0}} &
\undermat{b_1}{\B{k}{1} &\cdots & \B{k}{1}} & 
\cdots &
\undermat{b_k}{\B{k}{k} &\cdots & \B{k}{k} } \\
\end{array}
\right].$$
\smallskip

\noindent Let $\vec{v} \in \row^*(M)$.  Then we can (essentially) write 
$$\vec{v} = (\vec{v}_1^{(0)}, \ldots, \vec{v}_{b_0}^{(0)}, \vec{v}_1^{(1)}, \ldots, \vec{v}_{b_1}^{(1)}, \ldots,
\vec{v}_1^{(k)}, \ldots, \vec{v}_{b_k}^{(k)})$$
where $\vec{v}_{i}^{(j)} \in \row^*(B_j)$ for $0 \leq j \leq k$ and $1 \leq i \leq b_j$.
Moreover, for $1\leq i \leq  b_j$, we have $\vec{v}_i^{(j)} = \vec{v}_{b_j}^{(j)}$.
Thus,
$$\|\vec{v}\| = \sum_{j=0}^k b_j \|\vec{v}_{b_j}^{(j)}\|.$$
Because  $\vec{v}_{b_j}^{(j)} \in \row^*(B_j)$, Lemma~\ref{Bk_lemma} implies $\|\vec{v}_{b_j}^{(j)}\| = p^j$, therefore,
$$\|\vec{v}\| = \sum_{j=0}^k b_j p^j.$$
 Thus, $c(M) = \sum_{j=0}^k b_j p^j,$ and $\lambda(n) \leq \sum_{j=0}^k b_j p^j.$
\end{proof}
%and we have our desired upper bound 
%$$\lambda_p(n) \leq \sum_{j=0}^k b_j p^j,$$
%if $n = \sum_{j=0}^k b_j \sigma_j.$

Thus, we can combine Propositions~\ref{lin_comb} and \ref{sum_Bk} with 
Claim~\ref{n_as_lin_comb_of_sigmas} to obtain the following corollary.
\begin{corollary}\label{lambda_formula}
Let $n \in \integers^+$.  Suppose $n < \sigma_{k+1}$.
Then $$n = \sum_{j=0}^k b_j \sigma_j,$$
where  $0 \leq b_j \leq p$ for $0\leq j \leq k$, and if $b_j = p$, then $b_i = 0$ for $i < j$.
Moreover, 
$$\lambda(n) = \sum_{j=0}^k b_j p^j.$$
\end{corollary}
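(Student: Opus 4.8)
The plan is to assemble the corollary from three pieces already established: Claim~\ref{n_as_lin_comb_of_sigmas} for the representation of $n$, Proposition~\ref{sum_Bk} for the upper bound, and Proposition~\ref{lin_comb} for the matching lower bound. First I would apply Claim~\ref{n_as_lin_comb_of_sigmas} directly: for $n \in \integers^+$ with $n < \sigma_{k+1}$ it produces $n = \sum_{j=0}^k b_j \sigma_j$ with $0 \leq b_j \leq p$ for each $j$ and with the structural property that $b_j = p$ forces $b_i = 0$ for all $i < j$. This is precisely the first assertion of the corollary.

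For the upper bound I would feed this same representation straight into Proposition~\ref{sum_Bk}, obtaining $\lambda(n) \leq \sum_{j=0}^k b_j p^j$ with no additional work.

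The only step that requires attention is the lower bound, since Proposition~\ref{lin_comb} constrains the top and bottom coefficients of its sum, whereas the representation from the Claim may carry zero coefficients at either end. I would let $\ell$ be the smallest index with $b_\ell \neq 0$ and let $k'$ be the largest index with $b_{k'} \neq 0$, both of which exist because $n \geq 1$, and rewrite $n = \sum_{j=\ell}^{k'} b_j \sigma_j$. To apply Proposition~\ref{lin_comb} I must verify its hypotheses for this sum. The condition $b_{k'} \geq 1$ holds by the choice of $k'$. For the coefficient bounds I would invoke the structural property supplied by the Claim: if $b_j = p$ then every lower coefficient vanishes, so the only index at which a coefficient can equal $p$ is the smallest nonzero index $\ell$; consequently $0 \leq b_j \leq p-1$ for every $j \neq \ell$, while $1 \leq b_\ell \leq p$. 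Proposition~\ref{lin_comb} then yields $\lambda(n) \geq \sum_{j=\ell}^{k'} b_j p^j$, and since the omitted terms all have $b_j = 0$, this equals $\sum_{j=0}^k b_j p^j$.

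Combining the two bounds gives $\lambda(n) = \sum_{j=0}^k b_j p^j$, completing the corollary. I expect the sole point needing care to be the coefficient check in the lower-bound step; once the structural property of the Claim is used to pin down $\ell$ as the unique possible location of a coefficient equal to $p$, the hypotheses of Proposition~\ref{lin_comb} are met and the remainder is routine.
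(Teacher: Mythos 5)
Your proposal is correct and matches the paper's approach exactly: the paper states this corollary as an immediate combination of Claim~\ref{n_as_lin_comb_of_sigmas} (representation), Proposition~\ref{sum_Bk} (upper bound), and Proposition~\ref{lin_comb} (lower bound). Your careful verification that trimming the zero coefficients and using the structural property (``$b_j = p$ forces $b_i = 0$ for $i < j$'') makes the trimmed sum satisfy the hypotheses of Proposition~\ref{lin_comb} is precisely the detail the paper leaves implicit, and it is handled correctly.
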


\begin{corollary}\label{cor_recurr}
The sequence $\lambda(n)$ satisfies the meta-Fibonacci recurrence relation %in equation~\eqref{mfp_eqn}, i.e.,
$$\lambda(n) = \sum_{i=1}^p  \lambda(n-i+1 -\lambda(n-i)).$$
\end{corollary}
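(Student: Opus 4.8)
The plan is to read the recurrence directly off the closed form of Corollary~\ref{lambda_formula}. For $m\ge 0$ write its canonical representation as $m=\sum_{j\ge 0} b_j(m)\sigma_j$ (with $\lambda(0)=0$), so $\lambda(m)=\sum_{j\ge 0} b_j(m)p^j$. Two preliminary identities drive everything. First, since $\sigma_j-p^j=\sigma_{j-1}$ (as in the proof of Proposition~\ref{lin_comb}), the integer $g(m):=m-\lambda(m)=\sum_{j\ge1}b_j(m)\sigma_{j-1}$ has canonical digits $b_j(g(m))=b_{j+1}(m)$; hence $\lambda(g(m))=\sum_{j\ge1}b_j(m)p^{j-1}$ and therefore $\lambda(m)=b_0(m)+p\,\lambda(g(m))$. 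Second, the $i$-th summand rewrites as $n-i+1-\lambda(n-i)=g(n-i)+1$, so the identity to prove becomes $\sum_{i=1}^p \lambda\!\left(g(n-i)+1\right)=\lambda(n)$.

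The engine is a one-step lemma: for $m\ge 1$ one has $\lambda(m)-\lambda(m-1)=1$ when $b_0(m)\ge 1$ and $\lambda(m)-\lambda(m-1)=0$ when $b_0(m)=0$. I would prove this by inspecting how subtracting $1$ acts on the canonical representation: if $b_0(m)\ge1$ one decrements the units digit (the result stays canonical), while if $b_0(m)=0$ one performs a borrow across the block of low-order zeros, and a direct computation using $\sigma_\ell=1+p\sigma_{\ell-1}$ shows $\lambda$ is unchanged. Equivalently $g(m)-g(m-1)=[\,b_0(m)=0\,]$, which telescopes to $g(n-i)=g(n)-Z(i)$, where $Z(i):=\#\{m\in\{n-i+1,\dots,n\}:b_0(m)=0\}$ counts the integers with vanishing units digit in the length-$i$ window just below $n$. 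Thus the $i$-th summand equals $\lambda\!\left(g(n)+1-Z(i)\right)$.

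It then remains to evaluate these $p$ terms. Put $\ell=\min\{j\ge1:b_j(n)\ge1\}$. Applying the one-step lemma to $g(n)$, whose lowest nonzero digit sits at position $\ell-1$, shows $\lambda$ is constant along the ensuing borrow cascade, giving $\lambda(g(n)+1-z)=\lambda(g(n))$ for every $1\le z\le \ell$; and when $b_0(n)\ge1$ one also has $b_1(n)\le p-1$, whence $\lambda(g(n)+1)=\lambda(g(n))+1$. On the counting side, stepping down from $n$ shows the integers $m$ with $b_0(m)=0$ nearest below $n$ form a consecutive block $n-b_0(n),\,n-b_0(n)-1,\dots,n-b_0(n)-\ell+1$, after which the units digit runs through $p,p-1,\dots$; hence any window $\{n-i+1,\dots,n\}$ with $i\le p$ meets at most that one block, so every $Z(i)\in\{0,1,\dots,\ell\}$, and $Z(i)=0$ precisely when $i\le b_0(n)$. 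In particular exactly $b_0(n)$ of the indices $i\in\{1,\dots,p\}$ give $Z(i)=0$ (using $b_0(n)\le p$). Summing, the $b_0(n)$ terms with $Z(i)=0$ each contribute $\lambda(g(n))+1$ and the remaining $p-b_0(n)$ terms each contribute $\lambda(g(n))$, so the total is
\[
\sum_{i=1}^p \lambda\!\left(g(n)+1-Z(i)\right)=p\,\lambda(g(n))+b_0(n)=\lambda(n),
\]
by the shift identity, which is the claim. (Note the value $\lambda(g(n)+1)$ is invoked only for the $Z(i)=0$ terms, which exist only when $b_0(n)\ge1$, exactly the case where $\lambda(g(n)+1)=\lambda(g(n))+1$ holds.)

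The step I expect to be most delicate is the borrow cascade underlying both the one-step lemma and the bound $Z(i)\le\ell$: one must track precisely how the canonical representation — and in particular the units digit — evolves when $1$ is repeatedly subtracted from a number whose low-order $\sigma$-digits vanish, and verify that every representation so produced stays canonical (a digit equal to $p$ forcing all lower digits to be $0$). Once that cascade is pinned down, the remainder is bookkeeping. Finally, I would dispatch the small cases $1\le n\le p$ — where some arguments collapse to $\lambda(0)=0$ — using Corollary~\ref{small_n} as the initial conditions.
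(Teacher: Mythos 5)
Your proposal is correct, but it proves the corollary by a genuinely different route than the paper. The paper's proof is an appeal to an external result: Corollary~32 of Ruskey and Deugau~\cite{rusdeu} says that the sequence defined by the meta-Fibonacci relation~\eqref{mfp_eqn} is equally defined by the simpler recurrence $\lambda(n)=p^k+\lambda(n-\sigma_k)$ for $\sigma_k\le n<\sigma_{k+1}$, and the closed form of Corollary~\ref{lambda_formula} visibly satisfies that simpler recurrence (subtracting $\sigma_k$ just decrements the leading digit, lowering $\lambda$ by $p^k$); transferring back to~\eqref{mfp_eqn} then uses, tacitly, that the simpler recurrence together with the initial values determines the sequence uniquely. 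You instead verify the meta-Fibonacci relation directly from the digit expansion, with no outside input: the rewriting $n-i+1-\lambda(n-i)=g(n-i)+1$ where $g(m)=m-\lambda(m)$, the one-step lemma $\lambda(m)-\lambda(m-1)=[\,b_0(m)\ge 1\,]$, the telescoped identity $g(n-i)=g(n)-Z(i)$, the evaluation of $\lambda$ along the borrow cascade, and the shift identity $\lambda(n)=b_0(n)+p\,\lambda(g(n))$. I checked these steps --- the cascade computation via $\sigma_s=1+p\sigma_{s-1}$, the window count ($Z(i)=0$ precisely when $i\le b_0(n)$, and $Z(i)\le\ell$ because the next block of zero-units-digit integers starts at $n-b_0(n)-\ell-p$, out of reach of any window of length at most $p$), and the edge cases $b_0(n)=p$ and $\ell=1$ --- and they all hold. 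The trade-off: the paper's proof is a few lines but imports a nontrivial theorem; yours is longer but self-contained and makes explicit how the recurrence emerges from borrowing in the mixed base $(\sigma_j)$.

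One gap to close in a full write-up: you treat the digits $b_j(m)$ as well-defined functions of $m$, and you repeatedly identify a representation produced by a borrow with ``the'' canonical representation of the number in question. That requires uniqueness of the canonical representation, which the paper never establishes (Claim~\ref{n_as_lin_comb_of_sigmas} only constructs one greedily); the verification you flag, that each representation produced ``stays canonical,'' is necessary but not sufficient without uniqueness. Fortunately uniqueness is easy: if two canonical representations of the same integer differ, compare them at the largest index $j$ where their digits differ; the resulting discrepancy is at least $\sigma_j$, while a canonical lower-order tail is at most $p\sigma_{j-1}=\sigma_j-1$, a contradiction. (Working with $\lambda$-values alone, via Propositions~\ref{lin_comb} and~\ref{sum_Bk} applied to any canonical representation, would not suffice, since your dichotomy $b_0(m)=0$ versus $b_0(m)\ge 1$ needs the digits themselves to be unambiguous.) With that short lemma added, your argument is complete.
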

\begin{proof}[Proof of Corollary~\ref{cor_recurr}.]
We refer to Corollary~32 in \cite{rusdeu}, which implies that a sequence which is defined by the
meta-Fibonacci recurrence relation~\eqref{mfp_eqn} is also defined by the recurrence relation
\begin{equation}\label{alt_mfp_eqn}
\lambda(n) = p^k + \lambda(n - \sigma_k), 
\end{equation}
for $\sigma_k \leq n < \sigma_{k+1}$.
Based on Corollary~\ref{lambda_formula}, it is clear that $\lambda(n)$ satisfies recurrence~\eqref{alt_mfp_eqn}.
Therefore, $\lambda(n)$ satisfies the meta-Fibonacci recurrence~\eqref{mfp_eqn}.
\end{proof}

%\tcb{
%\section{Conclusion}
%???
%}

 \end{document}